\documentclass[nosecthm,noseceq]{ws-ijnt}
\usepackage[T1]{fontenc}
\usepackage{lmodern,mathtools,booktabs,needspace}
\usepackage{microtype}
\usepackage[hidelinks]{hyperref}
\hypersetup{pdftitle={Nonlinear twists of Rankin--Selberg coefficients in the eigenvalue aspect},pdfauthor={Yangbo Ye}}
\newcommand{\e}{\mathrm e}
\newcommand{\supp}{\operatorname{supp}}
\newcommand{\norm}[1]{\left\lVert #1\right\rVert}
\newcommand{\M}{\mathcal M}
\newcommand{\B}{\mathcal B}

\title{Nonlinear twists of Rankin--Selberg coefficients\\in the eigenvalue aspect}
\author{Yangbo Ye}
\address{Department of Mathematics, The University of Iowa\\
Iowa City, Iowa 52242, United States\\
\email{yangbo-ye@uiowa.edu}}
\renewcommand{\wsdraftnote}{}
\begin{document}
\markboth{Yangbo Ye}{Nonlinear twists of Rankin--Selberg coefficients}

\maketitle
\begin{abstract}
Let $g$ be a fixed primitive Hecke--Maass cusp form, and let $f$ vary
with spectral parameter $k_f=K$, at a fixed level and with trivial
nebentypus. We study smooth sums of
$\lambda_f(n)\lambda_g(n)e(\alpha n^\beta)$ for fixed real
$\alpha\ne0$. The crucial input is the smooth spectral second moment
of Lau, Liu, and Ye, for which the window length can be taken to be
$L=K^{1/3+a}$ for arbitrarily small fixed $a>0$. We show that, for every
$\kappa>0$, there is $\beta_0>0$ such that the sum is
$O(K^{2/3+\kappa}X^{1/2})$ for $1\le X\le K^2$ and
$0<\beta\le\beta_0$. We give a detailed Baire category argument
which makes the dependence on the smooth weight uniform through
one finite derivative norm. A second argument keeps the phase
explicit, applies Voronoi summation and shifted convolution
estimates, and gives the numerical bound
$O(K^{423/224+(5/7)\beta+\varepsilon})$ at $X=K^2$ for
$0<\beta<5/32$.
\end{abstract}
\keywords{Rankin--Selberg coefficients; nonlinear exponential sums;
spectral moments; subconvexity; shifted convolution.}
\ccode{Mathematics Subject Classification 2020: 11F66, 11F30, 11L07, 11F72}

\section{Introduction}

Throughout, \(N\) is fixed and the nebentypus is trivial.
Both forms are primitive Hecke--Maass cusp forms for \(\Gamma_0(N)\),
and the form \(g\) is fixed. Their positive Fourier coefficients
are therefore real in the Hecke normalization. We normalize the Hecke eigenvalues by
\(\lambda_f(1)=\lambda_g(1)=1\), and put
\begin{equation}\label{eq:1}
\e(x)=e^{2\pi i x},\qquad
 S_X(f,g;\alpha,\beta)
 =\sum_{n\geq1}\lambda_f(n)\lambda_g(n)
          \e(\alpha n^\beta)\phi(n/X).
\end{equation}
The positive-integer convention in \eqref{eq:1} is necessary when \(\beta\) is
not an integer. Constants are allowed to depend on \(N,g\), and on the
fixed smooth functions that occur below.

We ask for cancellation which is uniform as the eigenvalue of $f$
tends to infinity. At the conductor scale $X=K^2$, the estimate
$O(K^{2+\varepsilon})$ is the natural comparison. The purpose of
this paper is to obtain a power saving for nonlinear twists in a
positive range of $\beta$ while keeping $g$ fixed.

There are two arguments. The first starts with the smooth moment
in Lau, Liu, and Ye \cite{LLY}. Its implied constant initially
depends on a fixed smooth weight. We prove that one finite
derivative norm controls this dependence simultaneously for all
admissible spectral and summation parameters. We can then insert
the weight $e(\alpha n^\beta)$ and choose a sufficiently small
positive range of $\beta$. The second argument follows the phase
through Kuznetsov and Voronoi summation. It uses a longer spectral
window but yields an explicit range of $\beta$.

The principal conclusion is the following.
\begin{theorem}\label{thm:main}
For every \(\kappa>0\) there exists
\(\beta_0=\beta_0(N,g,\kappa)>0\) with the following property.
For every fixed \(\alpha\in\mathbb R\), every
\(\phi\in C_c^\infty(-1,1)\), and every \(0<\beta\leq\beta_0\),
\begin{equation}\label{eq:2}
|S_X(f,g;\alpha,\beta)|
 \ll_{N,g,\kappa,\alpha,\phi}
 K^{2/3+\kappa}X^{1/2},
 \qquad K=k_f\ge2,\quad 1\leq X\leq K^2.
\end{equation}
Here $k_f$ is real. The implied constant can be chosen uniformly for
\(0<\beta\leq\beta_0\). The threshold \(\beta_0\) is independent of
\(\alpha\) and \(\phi\).
\end{theorem}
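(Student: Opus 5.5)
The plan follows the first of the two arguments announced in the introduction. We start from the smooth spectral second moment of Lau, Liu, and Ye \cite{LLY}, taken with window length $L=K^{1/3+a}$ for $a>0$ small. For a fixed weight $W\in C_c^\infty(0,\infty)$ it gives
\[
\sum_{|k_h-K|\le L}\Bigl|\sum_n\lambda_h(n)\lambda_g(n)W(n/X)\Bigr|^2\ll_W K L X^{1+\varepsilon}.
\]
Dropping all forms except $h=f$ and taking square roots yields
\[
\sum_n\lambda_f(n)\lambda_g(n)W(n/X)\ll_W (KLX)^{1/2}X^{\varepsilon}\ll K^{2/3+a/2+\varepsilon}X^{1/2},
\]
which is the shape of \eqref{eq:2} once $a/2+\varepsilon<\kappa$. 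Everything therefore reduces to applying this bound with the $f$-independent weight $W(x)=\phi(x)\e(\alpha X^\beta x^\beta)$. The difficulty is that this weight depends on $X$, and hence on $K$.

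\textbf{Step 1 (uniformity in the weight).} I would show that there are $m=m(N,g,\kappa)$ and $C$ such that the displayed bound holds for every $W\in C_c^\infty(1/4,1)$ with constant $C\|W\|_{C^m}$, uniformly in $K\ge2$ and $1\le X\le K^2$. I expect this to be the main obstacle. The plan is a Baire category argument. Consider the Fréchet space $\mathcal F=C_c^\infty$ of functions supported in a fixed compact set, and for each $M$ the set
\[
E_M=\Bigl\{W:\ \Bigl|\sum_n\lambda_h(n)\lambda_g(n)W(n/X)\Bigr|\le M (KLX)^{1/2}X^{\varepsilon}\ \text{for all } K,X,h\Bigr\}.
\]
Each $E_M$ is closed, because each individual sum is a continuous linear functional of $W$; it is also symmetric and convex. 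By \cite{LLY} the union of the $E_M$ is all of $\mathcal F$, so some $E_M$ has nonempty interior. Symmetry and convexity then place a $C^m$-ball about $0$ inside $E_{M}$, and rescaling gives the $\|W\|_{C^m}$ dependence. The subtle point is that the parameters range over an uncountable family and the bound must be stated via the supremum. This is acceptable because closedness survives intersections.

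\textbf{Step 2 (inserting the phase).} After a dyadic partition of unity we may assume $\phi$ is supported in $[1/4,1]$. The remaining part, $n\ll 1$, is trivially $O(1)$. Write $W(x)=\phi(x)\e(\alpha X^\beta x^\beta)$. Each derivative of the phase contributes a factor at most $|\alpha| X^\beta\le|\alpha|K^{2\beta}$, so
\[
\|W\|_{C^m}\ll_{\phi,\alpha,m}K^{2m\beta}.
\]
Choose $a$ and $\varepsilon$ with $a/2+\varepsilon\le\kappa/2$; this fixes $m$. Then take $\beta_0=\kappa/(4m)$. With these choices Step 1 gives
\[
|S_X|\ll K^{2/3+\kappa/2+2m\beta}X^{1/2}\le K^{2/3+\kappa}X^{1/2}.
\]
Neither $m$ nor $\beta_0$ depends on $\alpha$ or $\phi$, and the bound is uniform for $0<\beta\le\beta_0$. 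The implied constant depends on $\alpha$ and $\phi$ only through the $C^m$ norm bound.
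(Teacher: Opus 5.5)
Your proposal is correct and follows essentially the paper's argument. You use a Baire (Banach--Steinhaus) argument on the Fr\'echet space of smooth functions with one fixed compact support to make the Lau--Liu--Ye bound depend on a single norm $\|W\|_{C^m}$. You then insert the phase, using $\|W\|_{C^m}\ll(1+|\alpha|)^mK^{2m\beta}$, and take $\beta_0=\kappa/(4m)$.

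The differences are minor:
\begin{itemize}
\item You run the category argument on the individual linear functionals rather than on the moment seminorm $q_P$ used in the paper. This is enough for the individual bound.
\item Your reduction ``we may assume $\phi$ is supported in $[1/4,1]$'' should be made explicit. As in the paper, apply Step~1 to each block $\psi(u)\phi(Yu/X)\e(\alpha Y^\beta u^\beta)$ at every dyadic scale $1\le Y\le X$, using the uniformity of Step~1 in the scale, and then sum $\sum_Y Y^{1/2}\ll X^{1/2}$.
\end{itemize}
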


At \(X=K^2\), \eqref{eq:2} becomes
\begin{equation}\label{eq:3}
|S_{K^2}(f,g;\alpha,\beta)|\ll K^{5/3+\kappa}
       =X^{5/6+\kappa/2}.
\end{equation}
Thus it is a power saving over \(X\) whenever \(0<\kappa<1/3\).
Taking \(\kappa=5/24\) gives \(X^{15/16}\) for a positive
range of \(\beta\).

The order of the quantifiers matters. The theorem says that for every
fixed \(\kappa\) one can choose a positive range of \(\beta\).
It does not say that one fixed positive range works for every
\(\kappa>0\). In particular, it does not assert a bound
\(K^{5/3+\varepsilon}\) for every \(\varepsilon>0\) at one fixed
positive value of \(\beta\).

\subsection{Earlier subconvexity bounds and related nonlinear sums}

We first recall the eigenvalue and weight aspect of the subconvexity
problem. The fixed-level analytic conductor of $L(1/2+it,f\otimes g)$
has order $K^4$ when $g,t$ are fixed and $K$ is the spectral parameter
of $f$. For a holomorphic $f$, the same scale is obtained when $K$
denotes its weight. Convexity gives $K^{1+\varepsilon}$.
Let $\theta$ be an admissible exponent toward the Ramanujan and
Selberg conjectures. We use $\theta=7/64$ from Kim and Sarnak
\cite{KS}.

Sarnak \cite{S} obtained a subconvexity bound when the varying
form is holomorphic. Liu and Ye \cite{LY} treated varying Maass
forms. There is a gap in Sections~4.14--4.15 of \cite{LY}; the
stronger exponent $(3+2\theta)/4$ stated there is not justified
by that argument. The valid exponent is $(15+2\theta)/16$.
The corrected calculation is given in \cite[Sections~8--9]{LYTrace}
and is explicitly recorded in \cite[(1.3)]{LLYJNT}.

The shifted convolution estimates of Lau, Liu, and Ye \cite{LLYJNT}
give subconvexity even with the trivial value $\theta=1/2$.
Blomer \cite{B04,B05} obtained further shifted convolution estimates
and a slightly stronger subconvexity exponent. Finally,
Lau, Liu, and Ye \cite{LLY} proved the exponent $2/3$, independent
of $\theta$. The relevant exponents are recorded in Table~\ref{tab:history}.
The numerical entries are obtained by substituting $\theta=7/64$;
they need not be the numbers printed in the earliest versions of
the respective papers.

\begin{table}[ht]
\caption{Exponents $\delta$ in $L(1/2+it,f\otimes g)\ll K^{\delta+\varepsilon}$.}
\label{tab:history}
\centering
\begin{tabular}{lll}
\hline
Source & $\delta$ & At $\theta=7/64$\\
\hline
Sarnak \cite{S} (holomorphic $f$) & $18/(19-2\theta)$ & $576/601$\\
Liu--Ye, corrected \cite{LY,LYTrace} & $(15+2\theta)/16$ & $487/512$\\
Lau--Liu--Ye \cite{LLYJNT} & $1-1/(8+4\theta)$ & $119/135$\\
Blomer \cite{B05} & $(6-2\theta)/(7-4\theta)$ & $37/42$\\
Lau--Liu--Ye \cite{LLY} & $2/3$ & $2/3$\\
\hline
\end{tabular}
\end{table}

For the present application, a pointwise bound for the central
$L$-value is not the input we need. We need the smooth coefficient
moment \eqref{eq:5}, with its allowed window length and its dependence on
the weight. We compare earlier versions of this moment immediately
after Proposition~\ref{prop:input}. The passage from a fixed weight
to a varying oscillatory weight is supplied by Lemma~\ref{lem:baire}.

Nonlinear twists of automorphic coefficients have also been studied
in settings where the form is fixed. Ren and Ye
\cite{RY10,RY13,RY14,RY15a,RY15b,RY16} developed resonance and rapid-decay
formulae for single forms and higher-degree coefficients. Czarnecki \cite{Cz16}
studied resonance sums for Rankin--Selberg products.
For products of two fixed $\mathrm{GL}_2$ forms,
Huang, Sun, and Zhang \cite{HSZ} prove estimates for analytic twists;
their implied constants depend on both forms. Such estimates do
not by themselves give the uniformity in $k_f$ required here.

There are also related results obtained by allowing forms to vary.
Ye \cite{Ye22} considered nonlinear sums in the holomorphic weight
aspect, while Salazar and Ye \cite{SY17} studied spectral second
moments for the coefficients of one Maass form. Liu, Sun, and Ye
\cite{LSY20} studied double moments of Rankin--Selberg $L$-functions.
Gillespie, Samanta, and Ye \cite{GSY23} considered the same product
$\lambda_f(n)\lambda_g(n)$ and nonlinear phase as in \eqref{eq:1}, with both
holomorphic weights varying, and obtained bounds outside a small
exceptional set of pairs. Our estimates instead keep $g$ fixed and
apply to each varying primitive Maass form $f$.
The higher-order weighted stationary-phase formulas of McKee, Sun,
and Ye \cite{MSY17} provide related tools for oscillatory transforms.
Qin and Ye \cite{QY23} study spectral averages of resonance sums
for $\mathrm{GL}(3)\times\mathrm{GL}(2)$. More recently,
Shao and Zhang \cite{SZ27} study averages of
$\mathrm{GL}_2\times\mathrm{GL}_2$ resonance sums.
We use a second absolute moment, from which individual estimates
follow by positivity.

\section{The smooth moment supplied by the 2006 proof}

For \(K\geq2\), let \(\B_N(K,L)\) denote the primitive Hecke--Maass
forms of level \(N\), counted once, with real spectral parameters
satisfying \(|k_j-K|\leq L\). For a smooth function \(W\) supported in
\((1,2)\), define
\begin{equation}\label{eq:4}
T_{j,Y}(W)=
 \sum_{n\geq1}\lambda_{f_j}(n)\lambda_g(n)W(n/Y),
 \qquad
 \M_{K,L,Y}(W)=\sum_{f_j\in\B_N(K,L)}
                     |T_{j,Y}(W)|^2.
\end{equation}

\Needspace{13\baselineskip}
We use the following consequence of \cite{LLY}.
\begin{proposition}[Fixed-weight moment]\label{prop:input}
Fix \(0<d<1/3\) and \(a>0\). For every fixed
\(W\in C_c^\infty((1,2))\),
\begin{equation}\label{eq:5}
\M_{K,L,Y}(W)\ll_{W,N,g,d,a} K^{1+a}LY,
\end{equation}
uniformly for
\begin{equation}\label{eq:6}
K\geq2,\qquad 1\leq Y\leq K^2,\qquad
 K^{1/3+d}\leq L\leq K^{1-d}.
\end{equation}
\end{proposition}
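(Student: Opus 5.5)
The plan is to read off \eqref{eq:5} from the spectral large-sieve argument of \cite{LLY}, with no new analytic input. First I will open the square in $\M_{K,L,Y}(W)$. I will then insert a smooth spectral weight $h(k)$ that is nonnegative on the real line, satisfies $h(k)\gg1$ for $|k-K|\le L$, and decays rapidly outside $[K-2L,K+2L]$. A standard choice is a Gaussian $e^{-(k-K)^2/L^2}+e^{-(k+K)^2/L^2}$, adjusted by a polynomial factor so that it vanishes at the exceptional points. By positivity,
\[
\M_{K,L,Y}(W)\ll\sum_{j}\frac{h(k_j)}{L(1,\operatorname{sym}^2 f_j)}\,|T_{j,Y}(W)|^2
\]
plus the analogous Eisenstein contribution, which is nonnegative and can simply be added. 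The factor $L(1,\operatorname{sym}^2 f_j)\ll K^{a}$ costs only $K^{a}$, since it is bounded by $k_j^{\varepsilon}$. The forms are primitive, so only newforms of level $N$ appear. The full spectral basis can be used on the right, and the oldforms only add nonnegative terms.

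Next I will apply the Kuznetsov formula to
\[
\sum_{m,n}\lambda_g(m)\lambda_g(n)W(m/Y)\overline{W(n/Y)}\sum_j \frac{h(k_j)}{L(1,\operatorname{sym}^2 f_j)}\lambda_j(m)\lambda_j(n).
\]
The diagonal term is $\asymp KL\sum_n|\lambda_g(n)|^2|W(n/Y)|^2$, and Rankin--Selberg gives $\ll KLY$. The remaining terms are the Kloosterman terms
\[
\sum_c c^{-1}S(m,n;c)\,H^{\pm}\!\left(\frac{4\pi\sqrt{mn}}{c}\right),
\]
where $H^\pm$ are the Bessel transforms of $h$. With $h$ localized at $K$ with width $L$, the transform $H^{\pm}(x)$ is negligible unless $x\gg KL$. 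Its main part has the shape
\[
KL\,x^{-1/2}\,\e(\pm x)\,\hat h\text{-type}\bigl(L\log(x/K)\bigr)\text{-oscillation}.
\]
So only $c\ll \sqrt{mn}/(KL)\le Y/(KL)\le K/L$ contribute. This range is short because $L\ge K^{1/3+d}$.

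Following \cite{LLY}, I will then apply Voronoi summation for $g$ in both $m$ and $n$, or equivalently use the $\mathrm{GL}_2$ functional equation on the additive twists $\sum_m\lambda_g(m)\e(m\bar a/c)\cdots$. Combined with stationary phase in the Bessel kernel, this converts the off-diagonal into dual sums of length about $c^2K^2/Y$. The sums are then bounded using the Weil bound and the Rankin--Selberg mean square of $\lambda_g$. This is exactly the off-diagonal treatment in \cite{LLY}, and it is where the hypothesis $L\ge K^{1/3+d}$ is used. The resulting bound is $O(K^{1+a}LY)$ uniformly in $Y\le K^2$. The upper bound $L\le K^{1-d}$ guarantees the asymptotic separation of $h$ from $k=0$ and the validity of the uniform Bessel asymptotics. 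All implied constants depend on finitely many derivatives of $W$ through integration by parts, and hence only on $W$.

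The main obstacle is this off-diagonal Kloosterman/Voronoi step, in particular uniformity in the full range $1\le Y\le K^2$ and $K^{1/3+d}\le L\le K^{1-d}$. The paper \cite{LLY} is written for the specific choices needed for subconvexity, roughly $Y\asymp K^2$ and $L=K^{1/3+\epsilon}$. I would first check that every estimate there is monotone in $Y$ and $L$. For small $Y$ the argument simplifies: when $Y\le KL$, the Kloosterman terms are negligible outright, since $x\ll Y/c\ll KL$. For larger $L$, the off-diagonal only improves relative to the diagonal $KLY$. If some step of \cite{LLY} is stated only at the critical $L$, I would reduce larger $L$ to it by covering $[K-L,K+L]$ with $\ll L/L_0$ windows of length $L_0=K^{1/3+d}$ and summing the resulting bounds $K^{1+a}L_0Y$. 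This reduction is valid because $\M$ is additive over disjoint windows and all the windows have centers $\asymp K$.
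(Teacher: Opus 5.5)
Your proposal follows the paper's route. You read \eqref{eq:5} off from the dyadic moment established inside the proof of \cite{LLY} (its (5.1)--(5.4), (11.8), and Section~12 for Maass $g$), remove the harmonic weight using only the upper bound $L(1,\operatorname{sym}^2 f)\ll k_f^{\varepsilon}$, keep oldforms and the continuous spectrum by positivity, and let the dependence on $W$ enter only through finitely many derivative bounds; the paper does exactly this. Two small remarks: your covering of a long window by windows of length $\asymp K^{1/3+d}$ is a valid extra (with constant-factor adjustments, and allowing $Y$ slightly above $K'^2$ for centers $K'<K$), where the paper instead takes the small parameter of \cite{LLY} small relative to $d,a$; and your heuristic shape of $H^{\pm}$ should have $\hat h$ evaluated at an argument $\asymp KL/x$ together with an extra phase of size $K^2/x$ (compare the paper's $\mathcal H$ and \eqref{eq:37}), not $\hat h(L\log(x/K))$, which would be negligible for all $x\gg KL$, though nothing in the cited argument depends on this.
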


Here \eqref{eq:5} is the dyadic moment established in the proof of the 2006
paper, rather than just its theorem about central \(L\)-values.
Its equations (5.1)--(5.4) formulate the sum with a smooth weight
supported in \([1,2]\); equation (11.8) completes the moment estimate
for holomorphic \(g\), and Section 12 gives the Maass modifications.
The proof uses the support of \(W\) and bounds for its derivatives.
No identity special to the approximate functional equation is used
in that part of the proof. Every fixed \(W\) has fixed derivative
bounds, and complex weights may also be handled by taking real and
imaginary parts.

The small parameter in \cite{LLY} can be chosen sufficiently small
relative to \(d\) and \(a\). This accommodates the margins in \eqref{eq:6}, the
constant multiple of that small parameter in the final exponent,
and any factor \(Y^\varepsilon\), since \(Y\leq K^2\).

There is a normalization issue in using this input.
If an \(L^2\)-normalized primitive form has Fourier expansion
\[
 f(z)=\sqrt y\sum_{n\ne0}\rho_f(n)
               K_{ik_f}(2\pi|n|y)\e(nx),
\]
then
\begin{equation}\label{eq:7}
\rho_f(n)=\rho_f(1)\lambda_f(n),\qquad
 \omega_f=\frac{|\rho_f(1)|^2}{\cosh(\pi k_f)}.
\end{equation}
Kuznetsov naturally gives the moment with the weight \(\omega_f\).
For fixed level,
\begin{equation}\label{eq:8}
\omega_f\asymp_N L(1,\operatorname{Ad}f)^{-1},
 \qquad
 \omega_f^{\pm1}\ll_{N,\varepsilon}(1+k_f)^\varepsilon.
\end{equation}
The leading-coefficient comparison is the one used explicitly in
\cite{LLY}. For the adjoint value, see \cite[p.~163, (0.10), and
Corollary~0.3]{HL}, and \cite[Theorem~2]{I90}. In fact, removal of
the harmonic weight requires only the upper bound for
$L(1,\operatorname{Ad}f)$. On the window \eqref{eq:6}, \(k_f\asymp K\)
for sufficiently large \(K\), so removal of \(\omega_f\) costs
\(K^\varepsilon\). This loss is included in \(K^a\) in (5).
The full Fourier-coefficient moment includes oldforms, and its
primitive contribution can be retained by positivity.

We do not normalize arbitrary oldform vectors by dividing by their
first Fourier coefficient: that coefficient can vanish. Statements
for a primitive form of level dividing \(N\) can instead be made
using its usual unscaled lift, with a fixed level-dependent norm
factor.

\subsection{The length of the spectral window}

We make the comparison with earlier smooth moments explicit.
Suppressing arbitrarily small powers and keeping the smooth weight
fixed, the valid Liu--Ye calculation gives
\[
 \M_{K,L,Y}(W)
 \ll K^\varepsilon\left(KLY+
       \frac{K^5Y^{5/2+\theta}}{L^7}\right)
 =K^{1+\varepsilon}LY
       \left(1+\frac{K^4Y^{3/2+\theta}}{L^8}\right).
\]
See \cite[(9.7)--(9.13)]{LYTrace}, which retains the shift range
$|h|\ll K^{2+\varepsilon}c^2/Y$. When $Y\le K^2$, the second
term in parentheses is bounded if
\[
 L\ge K^{(7+2\theta)/8+a}.
\]
The displayed calculation uses level-one trace-formula notation;
Theorems~8.4--8.5 of \cite{LYTrace} also state results for
$\Gamma_0(N)$. We use this calculation to compare window exponents.
The full fixed-level and weight-uniform statement used below is
deduced separately from \cite{LLY}.

The stronger shifted convolution estimate in \cite{LLYJNT} leads,
in the near-conductor range considered there, to
\[
 \M_{K,L,Y}(W)
 \ll K^{1+\varepsilon}LY
       \left(1+\frac{K^2Y^{1/2+\theta}}{L^{4+2\theta}}\right).
\]
This is the leading term of \cite[(5.21)]{LLYJNT}; the remaining
expansion terms are no larger. At $Y\le K^2$, its bracket is
bounded for
\[
 L\ge K^{1-1/(4+2\theta)+a}.
\]
Here the coefficient moment is formulated in that paper for
$K^{2-\Delta}\le Y\le K^{2+\varepsilon}$, with a fixed small
$\Delta>0$. We do not infer an arbitrary-length coefficient
moment merely from its central-value theorem. The exponent
$1-1/(4+2\theta)=103/135$, at $\theta=7/64$, also occurs in
Ye's short-interval fourth moment \cite{Ye06}.

Thus the earlier window exponents relevant to \eqref{eq:5} are
$(7+2\theta)/8$ and $1-1/(4+2\theta)$, with the source qualifications
just stated. In \cite{LLY}, the moment is available throughout
$1\le Y\le K^2$ and the window can be shortened to
$L=K^{1/3+a}$ for arbitrarily small fixed $a>0$ sufficiently small.
This is the crucial improvement for our first theorem.

Indeed, if a moment of size $K^{1+\varepsilon}LY$ is available
with $L=K^{\gamma+a}$, positivity bounds a single coefficient sum by
\[
 (K^{1+\varepsilon}LY)^{1/2}
 =K^{(1+\gamma)/2+(a+\varepsilon)/2}Y^{1/2}.
\]
At $\gamma=1/3$, the exponent of $K$ before the weight loss is
$2/3$. At $Y=K^2$, the resulting size is $K^{5/3+o(1)}$.
Using only a window of length $K^{1/2+a}$ would give
$K^{7/4+o(1)}$ by the same argument. Lemma~\ref{lem:baire} below
allows us to quantify the additional loss from the nonlinear
weight by a finite derivative norm and absorb it into the
prescribed $K^\kappa$.

\section{A finite derivative norm controls the entire family}

The fixed-weight moment has an implied constant that may depend on every
derivative of the weight.  Before inserting the weights needed for the
nonlinear twist, we show that one finite collection of derivatives
controls the whole family of moments.  This is an application of the
Banach--Steinhaus principle.  We give the details, including the
completeness needed for the category argument; see also
\cite[Theorems~2.2 and~2.6]{Rudin}.

Fix a closed compact interval \(I\Subset(1,2)\).  We work over
\(\mathbb C\), and put
\begin{equation}\label{eq:9}
\begin{aligned}
 E=C_I^\infty&=\{W\in C^\infty(\mathbb R):\supp W\subset I\},\\
 p_r(W)&=\max_{0\le m\le r}\norm{W^{(m)}}_\infty,
 \qquad r=0,1,2,\ldots .
\end{aligned}
\end{equation}
The interval \(I\) is fixed: all members of \(E\) have their support
inside this same interval.  In particular, we are not taking the union
of spaces whose supports approach the endpoints of \((1,2)\).

For each \(r\), the function \(p_r\) satisfies
\[
 p_r(W+V)\leq p_r(W)+p_r(V),\qquad
 p_r(cW)=|c|p_r(W),\qquad p_r(W)\leq p_{r+1}(W).
\]
Also, \(p_r(W)=0\) implies \(W=0\), since the zeroth derivative is
included.  As usual, we refer to the \(p_r\) as seminorms when using
them to specify the derivative topology.  A sequence converges in this
topology precisely when every derivative of each fixed order converges
uniformly.  The sets defined by finitely many inequalities
\(p_r(W)<\rho_r\) are convex neighborhoods of zero.

\subsection{A complete metric for the derivative topology}

Define
\[
 d_E(W,V)=\sum_{r=0}^{\infty}2^{-(r+1)}
                   \min\{1,p_r(W-V)\}.
\]
The series converges because its summands are at most \(2^{-(r+1)}\).
This is a translation-invariant metric.  Positivity and symmetry follow
from those of the seminorms, while the triangle inequality follows from
\[
 \min\{1,a+b\}\leq\min\{1,a\}+\min\{1,b\}
 \qquad(a,b\geq0).
\]

The metric gives exactly the derivative topology.  Indeed, for every
fixed \(r\),
\[
 2^{-(r+1)}\min\{1,p_r(W-V)\}\leq d_E(W,V).
\]
Thus convergence in the metric implies convergence in every \(p_r\).
Conversely, given \(\epsilon>0\), first choose \(R\) so that
\[
 \sum_{r>R}2^{-(r+1)}=2^{-(R+1)}<\epsilon/2.
\]
If \(p_R(W-V)<\epsilon/2\), the increasing property of the
seminorms bounds the part of the metric sum with \(r\leq R\) by
\(\epsilon/2\), and the remaining part is smaller than
\(\epsilon/2\).  Hence \(d_E(W,V)<\epsilon\).  The same two
observations show that a sequence is Cauchy for \(d_E\) if and only
if it is Cauchy for every \(p_r\).

We now verify completeness.  Let \((W_n)\) be a Cauchy sequence.
For each nonnegative integer \(m\), the sequence \(W_n^{(m)}\) is
uniformly Cauchy on \(\mathbb R\), and therefore has a uniform limit
\(F_m\).  This limit is continuous and vanishes outside \(I\).
For any \(x,y\in\mathbb R\), the fundamental theorem of calculus gives
\[
 W_n^{(m)}(y)-W_n^{(m)}(x)
       =\int_x^y W_n^{(m+1)}(u)\,du.
\]
Uniform convergence permits passage to the limit: the error in the
integral is at most
\(|y-x|\norm{W_n^{(m+1)}-F_{m+1}}_\infty\).  Consequently,
\[
 F_m(y)-F_m(x)=\int_x^y F_{m+1}(u)\,du.
\]
Since \(F_{m+1}\) is continuous, this identity shows that
\(F_m'=F_{m+1}\).  Applying it successively for all \(m\), we find
that \(W=F_0\) is smooth and \(W^{(m)}=F_m\).  The function \(W\)
vanishes outside the closed interval \(I\), so \(\supp W\subset I\)
and \(W\in E\).  Finally, uniform convergence of the finitely many
derivatives of order at most \(r\) gives
\(p_r(W_n-W)\to0\) for every \(r\).  Thus \(d_E\) is complete.

It follows that \(E\) is a Fr\'echet space: its topology is locally
convex, is induced by a metric, and is complete.

\subsection{Continuity of each normalized moment}

\begin{lemma}[Finite-derivative transfer]\label{lem:baire}
For fixed \(N,g,d,a,I\), there exist an integer \(R\geq1\) and a
constant \(C>0\) such that
\begin{equation}\label{eq:10}
\M_{K,L,Y}(W)\leq C K^{1+a}LY\,p_R(W)^2
\end{equation}
for every \(W\in E\) and every triple in \emph{\eqref{eq:6}}.
The integer \(R\) and the constant \(C\) do not depend on \(W\).
\end{lemma}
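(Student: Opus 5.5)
The proof is a uniform boundedness argument on the Fréchet space $E$. For each admissible triple $\tau=(K,L,Y)$ in \eqref{eq:6} and each $f_j\in\B_N(K,L)$, consider the normalized map
\[
 \Phi_\tau(W)=\bigl(K^{1+a}LY\bigr)^{-1/2}\bigl(T_{j,Y}(W)\bigr)_{f_j\in\B_N(K,L)}\in\mathbb C^{\#\B_N(K,L)},
\]
so that $q_\tau(W)=\norm{\Phi_\tau(W)}_{\ell^2}=(\M_{K,L,Y}(W)/(K^{1+a}LY))^{1/2}$. The claim \eqref{eq:10} is exactly the statement $q_\tau(W)\le C^{1/2}p_R(W)$ for all $\tau$ and $W$.

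\textbf{Step 1: each $q_\tau$ is a continuous seminorm on $E$.} The map $W\mapsto T_{j,Y}(W)$ is linear. Since $W$ is supported in $I\subset(1,2)$, the sum defining it has only $n\le 2Y$ terms. Hence $|T_{j,Y}(W)|\le p_0(W)\sum_{n\le 2Y}|\lambda_{f_j}(n)\lambda_g(n)|$. The set $\B_N(K,L)$ is finite, so $\Phi_\tau$ is a linear map into a finite-dimensional space that is bounded by a $\tau$-dependent constant times $p_0$. Therefore $q_\tau$ is a continuous seminorm, satisfying subadditivity and homogeneity because $\Phi_\tau$ is linear and $\ell^2$ is a norm.

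\textbf{Step 2: pointwise boundedness.} For each fixed $W\in E\subset C_c^\infty((1,2))$, Proposition~\ref{prop:input} gives $\sup_\tau q_\tau(W)\le C_W^{1/2}<\infty$, where the supremum runs over all triples in \eqref{eq:6}. Complex $W$ are handled by taking real and imaginary parts, as remarked after the proposition.

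\textbf{Step 3: Baire category.} Define the closed sets $F_n=\{W\in E:\ q_\tau(W)\le n\text{ for all }\tau\}$. Each is closed because it is an intersection of preimages of closed sets under the continuous maps $q_\tau$. By Step 2, $E=\bigcup_n F_n$. Since $E$ is complete metric by the preceding subsection, Baire's theorem gives some $F_n$ containing a $d_E$-ball $B(W_0,\epsilon)$.

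By the description of the metric topology above, this ball contains a basic neighborhood $\{W:\ p_R(W-W_0)<\rho\}$ for some $R$ and $\rho>0$. To see this, choose $R$ with $2^{-(R+1)}<\epsilon/2$ and $\rho=\min(1,\epsilon/2)$. Then for $p_R(V)<\rho$ we have $q_\tau(V)\le q_\tau(W_0+V)+q_\tau(W_0)\le 2n$ uniformly in $\tau$.

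Scaling removes the restriction on $V$: for arbitrary $W\ne0$, apply this to $V=\rho W/(2p_R(W))$, which gives $q_\tau(W)\le (4n/\rho)\,p_R(W)$. Squaring yields \eqref{eq:10} with $C=(4n/\rho)^2$, and we may enlarge $R$ so that $R\ge1$.

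\textbf{Main obstacle.} The step needing care is Step 2. We must be sure that Proposition~\ref{prop:input} applies to every $W\in E$ with a constant uniform over the whole range \eqref{eq:6}, including complex weights and the harmonic-weight removal. The category argument itself is routine once completeness of $E$, proved above, is available. One more point to check is that the $q_\tau$ are genuinely continuous on $E$ rather than merely finite, which is what Step 1 ensures.
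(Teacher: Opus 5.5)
Your proposal is correct and follows the paper's proof step for step: continuity of each $q_\tau$ from the finiteness of $\B_N(K,L)$ and a $\tau$-dependent bound by $p_0$, pointwise boundedness from Proposition~\ref{prop:input}, Baire's theorem applied to the closed sets $F_n$, and then conversion of the $d_E$-ball into a $p_R$-neighborhood followed by rescaling, which gives the same constant $C=(4n/\rho)^2$. The paper additionally writes out a nested-ball proof of Baire's theorem, and remarks that a $W$-dependent threshold in $K$ could be absorbed by finiteness on bounded $K$-ranges; your argument needs neither once Proposition~\ref{prop:input} is taken as stated.
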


\begin{proof}
For an admissible parameter triple \(P=(K,L,Y)\), write
\begin{equation}\label{eq:11}
q_P(W)=
 \left(\frac{\M_{K,L,Y}(W)}{K^{1+a}LY}\right)^{1/2}.
\end{equation}
We first show that \(q_P\) is a continuous seminorm for each fixed
\(P\).

The cuspidal Laplace spectrum of \(\Gamma_0(N)\) has finite
multiplicities and only finitely many eigenvalues below any fixed
bound.  This statement concerns the cuspidal spectrum, rather than the
continuous Eisenstein spectrum.  Every form in the window
\(|k_j-K|\leq L\) has Laplace eigenvalue at most
\(1/4+(K+L)^2\).  Hence the set \(\B_N(K,L)\) is finite.  We count
chosen normalized basis representatives, not their arbitrary scalar
multiples.

The coordinate corresponding to a form \(f_j\in\B_N(K,L)\) is
\[
 A_{P,j}(W)=\frac{1}{(K^{1+a}LY)^{1/2}}
        \sum_{n\geq1}\lambda_{f_j}(n)\lambda_g(n)W(n/Y).
\]
Only finitely many integers occur, since \(n/Y\) must belong to
\(I\Subset(1,2)\).  Their coefficients do not depend on \(W\).
It follows directly that
\[
 A_{P,j}(cW+bV)=cA_{P,j}(W)+bA_{P,j}(V)
 \qquad(c,b\in\mathbb C).
\]
Moreover,
\[
 |A_{P,j}(W)|\leq c_{P,j}p_0(W),\qquad
 c_{P,j}=\frac{1}{(K^{1+a}LY)^{1/2}}
       \sum_{n/Y\in I}|\lambda_{f_j}(n)\lambda_g(n)|<\infty.
\]
Thus every coordinate is continuous.

Let \(A_P(W)\) be the finite vector consisting of these coordinates.
Then \(q_P(W)=\norm{A_P(W)}_2\).  For finite complex vectors
\(u,v\), Cauchy--Schwarz gives
\[
 \norm{u+v}_2^2
 =\norm{u}_2^2+2\operatorname{Re}\langle u,v\rangle
                         +\norm{v}_2^2
 \leq(\norm{u}_2+\norm{v}_2)^2.
\]
Consequently,
\[
 q_P(W+V)\leq q_P(W)+q_P(V),\qquad
 q_P(cW)=|c|q_P(W).
\]
The coordinate estimates also give
\[
 q_P(W)\leq
   \left(\sum_{f_j\in\B_N(K,L)}c_{P,j}^2\right)^{1/2}p_0(W).
\]
The coefficient on the right is finite for this fixed triple.
In particular,
\(|q_P(W)-q_P(V)|\leq q_P(W-V)\) proves continuity of \(q_P\).

Proposition~\ref{prop:input} now says that
\begin{equation}\label{eq:12}
\sup_P q_P(W)<\infty\qquad\text{for each fixed }W\in E.
\end{equation}
If the source estimate is phrased only for sufficiently large \(K\),
with the threshold depending on \(W\), the same conclusion holds.
On a bounded interval \(2\leq K\leq K_0\), all admissible \(L\)
and \(Y\) are bounded.  Thus the integers and the basis forms that
can occur belong to finite sets.  The normalization denominator is
bounded away from zero, since \(K\geq2\), \(L\geq K^{1/3+d}\),
and \(Y\geq1\).  The preceding coordinate estimate therefore gives
a bound by a constant times \(p_0(W)\), uniformly on this bounded
parameter range.  Combining this with the large-\(K\) estimate
proves (12).

\medskip\noindent\textbf{The category argument.}

For integers \(m\geq1\), define
\begin{equation}\label{eq:13}
E_m=\{W\in E:q_P(W)\leq m\text{ for every admissible }P\}.
\end{equation}
For each fixed \(P\), its sublevel set \(\{W:q_P(W)\leq m\}\)
is closed by continuity.  An arbitrary intersection of closed sets
is closed, so every \(E_m\) is closed.  Equation \eqref{eq:12} implies that
\(E=\bigcup_{m\geq1}E_m\).

We recall the form of the Baire category theorem needed here: a
nonempty complete metric space cannot be a countable union of closed
sets all having empty interior.  Its proof in the present setting can
be given by nested balls.  If every \(E_m\) had empty interior, its
complement would be open and dense.  Choose a closed metric ball
\(B_1\) of positive radius, at most \(2^{-1}\), inside
\(E\setminus E_1\).  Given \(B_{m-1}\), the open ball with the same
center and radius meets \(E\setminus E_m\).  Within this open
intersection choose a closed ball \(B_m\) of positive radius at most
\(2^{-m}\).  Then \(B_m\subset B_{m-1}\) and
\(B_m\cap E_m=\varnothing\).

The centers of these balls form a Cauchy sequence: all centers after
the \(m\)-th lie in \(B_m\), whose radius tends to zero.
Completeness gives a limit.  For each \(m\), the tail of the sequence
lies in the closed set \(B_m\), so the limit also lies in \(B_m\).
It therefore belongs to none of the \(E_m\), contradicting their
cover of \(E\).  This proves that some \(E_m\) has nonempty
interior.

Choose an interior point \(W_0\in E_m\) and a number
\(0<\epsilon_0\leq1\) such that the metric ball
\(d_E(W_0,W)<\epsilon_0\) is contained in \(E_m\).  Choose
\(R\geq1\) with \(2^{-(R+1)}<\epsilon_0/2\), and set
\(r=\epsilon_0/2\).  If \(p_R(V)<r\), then the increasing
seminorms bound the first \(R+1\) summands in \(d_E(V,0)\) by
\(r\), and the remaining sum is smaller than \(\epsilon_0/2\).
Translation invariance thus gives
\(d_E(W_0+V,W_0)<\epsilon_0\).  We have obtained
\begin{equation}\label{eq:14}
p_R(V)<r\quad\Longrightarrow\quad W_0+V\in E_m.
\end{equation}
This explicitly converts the metric neighborhood into a condition
on one finite derivative norm.

Both \(W_0+V\) and \(W_0\) belong to \(E_m\).  Subadditivity and
homogeneity therefore imply, for every admissible \(P\),
\begin{equation}\label{eq:15}
q_P(V)=q_P\bigl((W_0+V)-W_0\bigr)
       \leq q_P(W_0+V)+q_P(W_0)\leq2m
       \qquad\bigl(p_R(V)<r\bigr).
\end{equation}
For \(W\ne0\), we have \(p_R(W)>0\).  Apply \eqref{eq:15} to
\[
 V=\frac{r}{2p_R(W)}W,
 \qquad p_R(V)=r/2<r.
\]
Using homogeneity once more gives
\begin{equation}\label{eq:16}
q_P(W)=\frac{2p_R(W)}r q_P(V)
        \leq\frac{4m}r p_R(W).
\end{equation}
For \(W=0\), this inequality is immediate.  Squaring \eqref{eq:16} and
using \eqref{eq:11} proves \eqref{eq:10}, with \(C=(4m/r)^2\).

The set of parameter triples \(P\) need not be countable.
The countable collection to which the Baire theorem applies is the
collection \(E_1,E_2,\ldots\).
\end{proof}

Lemma~\ref{lem:baire} supplies a single finite derivative norm before
the oscillating weights are introduced.  Its order depends only on
the fixed moment parameters and the common support interval.
Consequently, the resulting admissible range of \(\beta\) can be
chosen independently of \(\alpha\) and \(\phi\).

\section{The original cutoff and the individual estimate}

We give the partition explicitly, because \(\phi\) may be nonzero
near zero. Put \(q=3/2\), and choose
\(\chi\in C_c^\infty((1,2))\), \(\chi\geq0\), positive on an interval
\([u_0,u_1]\) with \(u_1/u_0>q\). For \(u>0\), define
\begin{equation}\label{eq:17}
D(u)=\sum_{\ell\in\mathbb Z}\chi(u/q^\ell),\qquad
 \psi(u)=\frac{\chi(u)}{D(u)}.
\end{equation}
The sum defining \(D\) is locally finite and positive, and
\(D(qu)=D(u)\). Consequently,
\begin{equation}\label{eq:18}
\sum_{\ell\in\mathbb Z}\psi(u/q^\ell)=1\qquad(u>0).
\end{equation}
Choose $I\Subset(1,2)$ once and for all to contain $\supp\psi$,
and apply Lemma~\ref{lem:baire} with this interval. Both $\psi$
and $I$ are fixed independently of $\alpha,\beta,\phi,K,L,X,Y$.

For an integer \(n\geq2\), a term \(\psi(n/q^\ell)\ne0\) requires
\(q^\ell>n/2\geq1\). Thus only scales \(Y=q^\ell\geq1\) are needed for
\(n\geq2\). The integer \(n=1\) will be kept separately.
For a scale \(Y=q^\ell\), let
\begin{equation}\label{eq:19}
W_{Y,X,\alpha,\beta}(u)=
       \psi(u)\phi(Yu/X)\e(\alpha Y^\beta u^\beta).
\end{equation}
All these functions belong to the same space \(E\). A nonzero
weight requires \(Y<X/\inf I<X\). Thus every contributing scale
satisfies \(1\leq Y\leq K^2\) when \(1\leq X\leq K^2\).

For every fixed integer \(R\), the product rule and repeated
differentiation of the exponential give
\begin{equation}\label{eq:20}
p_R(W_{Y,X,\alpha,\beta})
 \leq C_{R,I,\phi}(1+|\alpha|Y^\beta)^R,
 \qquad 0<\beta\leq1.
\end{equation}
To check the uniformity in \(\beta\), note that every derivative of
\(u^\beta\) of order at most \(R\) is uniformly bounded on \(I\) for
\(0<\beta\leq1\). Derivatives of \(\phi(Yu/X)\) contribute
\((Y/X)^m\leq1\). Each derivative of the exponential contributes a
polynomial in \(\alpha Y^\beta\) of degree at most the order of
differentiation. These observations prove \eqref{eq:20}, including uniform
constants as \(\beta\) approaches zero.

Define the hard-window moment for the original sum by
\begin{equation}\label{eq:21}
\M_{K,L}(X;\alpha,\beta)
 =\sum_{f_j\in\B_N(K,L)}
           |S_X(f_j,g;\alpha,\beta)|^2.
\end{equation}
For all \(n\geq2\), \eqref{eq:18} gives the exact decomposition
\begin{equation}\label{eq:22}
S_X(f_j,g;\alpha,\beta)
 =\e(\alpha)\phi(1/X)
   +\sum_{\substack{Y=q^\ell\\\ell\geq0}}
       T_{j,Y}(W_{Y,X,\alpha,\beta}).
\end{equation}
Only finitely many scales contribute for a fixed \(X\).

The fixed-level Weyl law (see \cite{I}) gives
\begin{equation}\label{eq:23}
\#\B_N(K,L)\ll_N KL+K\log K\ll_{N,d} KL
\end{equation}
in the range (6). Hence the first term in \eqref{eq:22} has spectral
Euclidean norm \(O_\phi((KL)^{1/2})\).

Apply Minkowski's inequality to \eqref{eq:22}, followed by \eqref{eq:10} and (20).
We get
\begin{equation}\label{eq:24}
\begin{aligned}
 \M_{K,L}(X;\alpha,\beta)^{1/2}
 &\ll_\phi (KL)^{1/2}
   +(K^{1+a}L)^{1/2}
       \sum_{Y<X}Y^{1/2}(1+|\alpha|Y^\beta)^R\\
 &\ll (K^{1+a}LX)^{1/2}(1+|\alpha|X^\beta)^R .
 \end{aligned}
\end{equation}
In the last step we used the geometric sum
\(\sum_{q^\ell<X}q^{\ell/2}\ll_q X^{1/2}\).
There is no logarithmic loss. We have proved the following
weight-uniform moment.
\begin{proposition}\label{prop:twisted}
For fixed \(0<d<1/3\) and \(a>0\), there is an integer
\(R=R(N,g,d,a)\geq1\), independent of \(\alpha,\beta,\phi\), such that
\begin{equation}\label{eq:25}
\M_{K,L}(X;\alpha,\beta)
 \ll_{N,g,d,a,\phi}
 K^{1+a}LX(1+|\alpha|X^\beta)^{2R}
\end{equation}
for the window in \eqref{eq:6}, \(1\leq X\leq K^2\), and \(0<\beta\leq1\).
\end{proposition}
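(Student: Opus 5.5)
The statement is essentially the squared form of the chain of inequalities \eqref{eq:24}. My plan is to assemble it from the decomposition \eqref{eq:22}, Lemma~\ref{lem:baire}, the derivative bound \eqref{eq:20}, and the Weyl law \eqref{eq:23}. The one point that needs care is fixing the quantifiers. Fix $d,a$ and the interval $I\supset\supp\psi$, where $\psi$ is the fixed partition function from \eqref{eq:17}. Lemma~\ref{lem:baire} then supplies $R$ and $C$ that depend only on $N,g,d,a,I$. Since $I$ and $\psi$ are chosen once and for all, $R=R(N,g,d,a)$, and it is independent of $\alpha,\beta,\phi$.

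For each $f_j\in\B_N(K,L)$, I write $S_X(f_j,g;\alpha,\beta)$ via \eqref{eq:22} as the $n=1$ term plus $\sum_{Y}T_{j,Y}(W_{Y,X,\alpha,\beta})$. Only the scales $Y=q^\ell$ with $1\le Y<X\le K^2$ contribute, so every $(K,L,Y)$ in the sum is an admissible triple in \eqref{eq:6}. Viewing each term as a vector indexed by $j$, I apply Minkowski's inequality in $\ell^2(\B_N(K,L))$.

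The $n=1$ vector has entries bounded by $\norm{\phi}_\infty$, so its norm is $\ll_\phi(\#\B_N(K,L))^{1/2}\ll (KL)^{1/2}$ by \eqref{eq:23}.

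For a scale $Y$, the norm is $\M_{K,L,Y}(W_{Y,X,\alpha,\beta})^{1/2}$. By \eqref{eq:10} this is at most $C^{1/2}(K^{1+a}LY)^{1/2}p_R(W_{Y,X,\alpha,\beta})$. By \eqref{eq:20} this is in turn $\ll_\phi (K^{1+a}LY)^{1/2}(1+|\alpha|Y^\beta)^R$. Here I must check that the constant in \eqref{eq:20} is uniform in $0<\beta\le1$ and in $Y/X\le1$. This is the only mildly delicate point, and it follows because $u^\beta$ has bounded derivatives on $I$ uniformly in $\beta$.

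Next I bound $(1+|\alpha|Y^\beta)^R\le(1+|\alpha|X^\beta)^R$, since $Y<X$ and $\beta>0$. The sum $\sum_{q^\ell<X}q^{\ell/2}$ is a geometric series and is $\ll X^{1/2}$. This gives a total bound of $\ll (K^{1+a}LX)^{1/2}(1+|\alpha|X^\beta)^R$. The $n=1$ contribution is dominated by this bound because $X\ge1$ and $K^a\ge1$.

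Squaring yields \eqref{eq:25}. The implied constant depends on $N,g,d,a$ through $C$ and on $\phi$ through $C_{R,I,\phi}$ and $\norm{\phi}_\infty$. It does not depend on $\alpha$ or $\beta$, because $\alpha$ enters only through the explicit factor $(1+|\alpha|X^\beta)^{2R}$.

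I expect no real obstacle. The substantive work is done in Lemma~\ref{lem:baire}, and what remains is the bookkeeping that the weights all lie in the single space $E$ with admissible parameters.
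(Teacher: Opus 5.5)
Your proposal is correct and follows the paper's own argument essentially step for step: the exact decomposition \eqref{eq:22}, Minkowski's inequality over $\B_N(K,L)$, the Weyl bound \eqref{eq:23} for the $n=1$ term, Lemma~\ref{lem:baire} combined with the $\beta$-uniform derivative bound \eqref{eq:20} at each scale $Y<X$, and the geometric sum over scales, which together form the chain \eqref{eq:24}. You fix $I\supset\supp\psi$ once, so that $R$ depends only on $N,g,d,a$ and not on $\alpha,\beta,\phi$; this handling of the quantifiers also matches the paper.
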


Taking \(K=k_f\) and \(L=K^{1/3+d}\), the form \(f\) occurs in the
window, and positivity gives
\begin{equation}\label{eq:26}
\begin{aligned}
 |S_X(f,g;\alpha,\beta)|
 &\ll (K^{1+a}LX)^{1/2}(1+|\alpha|X^\beta)^R\\
 &=K^{2/3+(d+a)/2}X^{1/2}(1+|\alpha|X^\beta)^R .
 \end{aligned}
\end{equation}

\begin{proof}[Proof of Theorem \ref{thm:main}]
It suffices to consider \(0<\kappa<1/3\); a larger \(\kappa\) follows
from a smaller one. Choose \(d=a=\kappa/2\) in \eqref{eq:26}, and let \(R\)
be the corresponding integer. Put
\begin{equation}\label{eq:27}
\beta_0=\min\!\left(1,\frac{\kappa}{4R}\right).
\end{equation}
For \(1\leq X\leq K^2\) and \(0<\beta\leq\beta_0\),
\begin{equation}\label{eq:28}
(1+|\alpha|X^\beta)^R
 \leq(1+|\alpha|)^R K^{2R\beta}
 \leq(1+|\alpha|)^R K^{\kappa/2}.
\end{equation}
The initial exponent in \eqref{eq:26} is \(2/3+\kappa/2\).
Combining \eqref{eq:26} and \eqref{eq:28} proves (2). All derivative estimates used
above are uniform for \(0<\beta\leq1\), proving the claimed uniformity.
\end{proof}

As a concrete consequence with a nonnumerical range of $\beta$, take \(d=a=1/12\), \(L=K^{5/12}\), and \(X=K^2\). Equation \eqref{eq:26}
then reads
\begin{equation}\label{eq:29}
|S_{K^2}|\ll_{\alpha,\phi,N,g}K^{7/4+2R\beta}.
\end{equation}
Thus
\begin{equation}\label{eq:30}
0<\beta\leq\frac1{16R}
 \quad\Longrightarrow\quad
 |S_{K^2}|\ll K^{15/8}=X^{15/16}.
\end{equation}
Here \(R\) is chosen before \(\alpha,\beta,\phi\).

\begin{remark}
The proof does not compute $R$. Thus \eqref{eq:27} and \eqref{eq:30} assert the
existence of a positive range, but do not provide a numerical
endpoint. The next section gives an explicit range by retaining
the phase in the transformed sums.
\end{remark}

\section{A quantitative estimate with a longer window}

In this section the level is again any fixed \(N\). We use the
shifted-series bounds recorded in \cite[(1.7)--(1.10)]{LLYJNT}.
Write \(\theta=7/64\). The cubic bound alone gives the exponent
in \eqref{eq:54} below. The improvement from cubic to nearly linear growth
in the vertical parameter gives the stronger explicit result in
Theorem \ref{thm:numeric}.

\begin{theorem}\label{thm:numeric}
Fix \(0<\beta<5/32\), real \(\alpha\), and
\(\phi\in C_c^\infty(-1,1)\). For every \(\varepsilon>0\),
\begin{equation}\label{eq:31}
|S_{K^2}(f,g;\alpha,\beta)|
 \ll_{N,g,\alpha,\beta,\phi,\varepsilon}
 K^{423/224+(5/7)\beta+\varepsilon}.
\end{equation}
In particular, the exponent is below \(2\) if \(\varepsilon\) is
sufficiently small relative to \(5/32-\beta\).
\end{theorem}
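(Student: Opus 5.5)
We will argue through a spectral second moment, as in Section~4, but with the oscillatory weight carried through the spectral and arithmetic transforms instead of absorbed into a derivative norm. The plan is as follows.

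\textbf{Step 1: reduction to a smooth moment with a longer window.} Take $X=K^2$ and decompose $S_{K^2}(f,g;\alpha,\beta)$ dyadically by \eqref{eq:22}. This gives sums $T_{j,Y}(W_Y)$ with $W_Y=W_{Y,X,\alpha,\beta}$. Since $|\alpha|Y^\beta u^\beta$ has derivatives of size $Y^\beta$ on $I$, each $W_Y$ behaves like a smooth weight that oscillates at frequency $T:=Y^\beta\le K^{2\beta}$. We then embed $f$ in a smooth spectral window $h(k)=\exp(-(k-K)^2/L^2)$ of length $L=K^{\gamma}$, with $\gamma$ to be optimized. After removing the harmonic weight via \eqref{eq:8}, we bound
\[
\sum_j h(k_j)\,\omega_{f_j}\,|T_{j,Y}(W_Y)|^2.
\]

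\textbf{Step 2: Kuznetsov and Voronoi.} Open the square and apply the Kuznetsov formula. The diagonal term gives $KLY$, uniformly in $\alpha,\beta$, since it only involves $\sum|\lambda_g(n)W_Y(n/Y)|^2$. The off-diagonal term is a sum over $c$ of Kloosterman sums $S(m,n;c)$, weighted by the Bessel transform of $h$. This transform localizes $\sqrt{mn}/c$ near size $K$ with an oscillation $e(\pm 2\sqrt{mn}/c)$, up to $L$-dependent damping, which restricts $c\ll Y K^{\varepsilon}/(KL)\cdot K$ or similar. Next, we open the Kloosterman sums and apply $\mathrm{GL}_2$ Voronoi summation for $g$ in both $m$ and $n$. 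The phase $e(\alpha m^\beta)$ enters the Hankel-type integral transforms. Stationary phase then shows that the phase only shifts the stationary point and multiplies derivative bounds by powers of $T=Y^\beta$. The result is a shifted convolution sum
\[
\sum_{h}\sum_n \lambda_g(n)\lambda_g(n+h)\,V(n,h),
\]
with $|h|\ll K^{2+\varepsilon}c^2/Y$, where $V$ now has vertical (Mellin) parameter of size about $T\cdot(\text{previous})$.

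\textbf{Step 3: shifted convolution input (main obstacle).} Insert the bounds \cite[(1.7)--(1.10)]{LLYJNT} for the shifted Dirichlet series, with exponent $\theta=7/64$ and their dependence on the vertical parameter. Using only the cubic dependence gives a weaker exponent, the one in \eqref{eq:54}. Using the nearly linear dependence, the off-diagonal term becomes roughly
\[
K^{1+\varepsilon}LY\cdot \frac{K^{A}Y^{B}T^{C}}{L^{D}},
\]
with $B=1/2+\theta$ and $D=4+2\theta$, as in the $L$-dependence of the moment bound following \cite[(5.21)]{LLYJNT}, and with an extra factor $T^{C}$ from the phase. The hardest part is tracking the $T$-dependence precisely through Voronoi and the stationary-phase expansion, making sure that non-stationary ranges contribute negligibly even as $\beta\to 5/32$. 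We also have to check that the Mellin inversion of $V$ costs only $T^{1+\varepsilon}$.

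\textbf{Step 4: optimization.} Balance the diagonal $KLY$ against the off-diagonal term by choosing $\gamma$, which becomes a linear function of $\beta$. Then take square roots and use positivity. Summing over dyadic $Y\le K^2$ gives a geometric series dominated by $Y=K^2$. We expect the optimal $L$ to give exponent $(1+\gamma)/2+1=423/224+(5/7)\beta$. The condition $\gamma<1$, equivalently an exponent below $2$, should translate exactly into $\beta<5/32$, which would serve as the consistency check on the bookkeeping.
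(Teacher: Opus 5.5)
Your outline follows the paper's architecture: Kuznetsov, Voronoi, the shifted-series bounds, and a balance in $L$. But the estimate that produces the exponent is never derived, and the bookkeeping you supply does not produce it. The paper's proof rests on Proposition~\ref{prop:numericmoment}. With $F=|\alpha|Y^\beta$ and $Z=1+F+K/L$, the twisted moment is $\ll K^\varepsilon\{KLY(1+F)+Y^{3/2+\theta}Z^{5/2}\}$ for $K^{1/2+\delta}\le L\le K^{1-\delta}$. Your Step~3 leaves $A$ and $C$ undetermined and takes $D=4+2\theta$ from the different, untwisted Lau--Liu--Ye moment. With that $D$ and its numerator $K^2Y^{1/2+\theta}$, the balance at $\beta\to0$ gives $\gamma=103/135$ and exponent $254/135$, not $423/224$. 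The paper's off-diagonal at $Z\asymp K/L$ is $KLY\cdot K^{3/2}Y^{1/2+\theta}L^{-7/2}$. Your check ``$\gamma<1$ iff $\beta<5/32$'' is circular, because you solved for $\gamma$ from the exponent you wanted.

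Obtaining $Y^{3/2+\theta}Z^{5/2}$ requires several specific steps that your outline misses or gets wrong.
\begin{itemize}
\item Voronoi must be applied in one variable only. Doing it in both $m$ and $n$ turns $S(m,n;c)$ back into a Kloosterman sum, so no shifted convolution sum appears.
\item The moduli are $c\le K^aY/(KL)$; your range is larger by a factor $K$.
\item The Ramanujan sums $R_c(h)$ must be kept with their factor $1/c$ and averaged by $\sum_{0<|h|\le H}|R_c(h)|\le2H\tau(c)$. The trivial bound $|R_c(h)|\le c$ loses up to $Y/(KL)$.
\item The shift range is $|h|\le K^acQ_c$ with $Q_c=1+F+K^2c/Y$. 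This includes the term $cF$ that your range $K^{2+\varepsilon}c^2/Y$ omits.
\item The Plancherel bound \eqref{eq:45}, combined with \eqref{eq:42}, gives $|P(c,h,Y)|\ll K^{1+\varepsilon}Y^{\theta+\varepsilon}Q_c^{3/2+\varepsilon}$. So the Mellin cost is $Q_c^{3/2}$, not $T^{1+\varepsilon}$.
\end{itemize}

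Your treatment of the twist also has to change. The claim that the diagonal is uniform in $\alpha$ covers only the Kuznetsov delta term. After Voronoi, the zero shift $h=0$ cancels the square-root phases but not $\alpha Y^\beta(z^\beta-w^{2\beta}z^{-\beta})$, so its stationary moduli $c\ll K^a(1+F)Y/K^2$ survive. The paper bounds this term by $KLY(1+F)$, which does not decay in $L$. Hence a single monomial $K^AY^BT^C/L^D$ cannot describe the off-diagonal.

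The twist must also enter additively, through $Q_c$, rather than as ``$T\cdot(\text{previous})$''. Suppose you run your multiplicative heuristic, $Z\approx FK/L$, together with the zero-shift term $KLYF$. The balance then gives only $423/224+10\beta/7$, valid for $\beta<5/64$. You reach $5\beta/7$ only by tacitly treating the zero shift as $O(KLY)$, which you do not prove.

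In the paper, $L=K^\eta$ with $\eta=(5+4\theta-4\beta)/7$. Here $\beta<5/32$ is exactly the condition $F=o(K/L)$, which gives $Z\asymp K/L$. The two terms then have exponents $(3+\eta)/2+\beta$ and $11/4+\theta-5\eta/4$, and these coincide at $423/224+5\beta/7$.
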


We prove a moment estimate first. For a fixed smooth function
\(A\) supported in a fixed compact interval in \((1,2)\), put
\begin{equation}\label{eq:32}
U_{j,Y}=\sum_{n\geq1}\lambda_{f_j}(n)\lambda_g(n)
                A(n/Y)\e(\alpha n^\beta),\qquad
 F=|\alpha|Y^\beta,\quad Z=1+F+K/L.
\end{equation}
For fixed \(\delta,b>0\), assume
\begin{equation}\label{eq:33}
1\leq Y\leq K^2,\qquad
 K^{1/2+\delta}\leq L\leq K^{1-\delta},\qquad
 F\leq KL K^{-b}.
\end{equation}
The constants below can depend on a fixed finite number of
seminorms of \(A\).

\begin{proposition}\label{prop:numericmoment}
Under \eqref{eq:33}, for every \(\varepsilon>0\),
\begin{equation}\label{eq:34}
\sum_{f_j\in\B_N(K,L)}|U_{j,Y}|^2
 \ll K^\varepsilon
       \left\{KLY(1+F)+Y^{3/2+\theta}Z^{5/2}\right\}.
\end{equation}
If only the cubic shifted-series bound is used, the same argument
proves \eqref{eq:34} with \(Z^{9/2}\) in place of \(Z^{5/2}\).
\end{proposition}

\subsection{Kuznetsov, the kernel expansion, and Voronoi}

For clarity, all truncation powers in this proof are initially
\(K^a\), where \(a>0\) is chosen sufficiently small in terms of
\(\delta,b,\varepsilon\). Finite occurrences of these powers are
absorbed in the final \(K^\varepsilon\).
Apply Kuznetsov with the positive even spectral test
\begin{equation}\label{eq:35}
H_{K,L}(t)=\frac{t^2+1/4}{K^2+1/4}
 \left\{\frac1{\cosh((t-K)/L)}
                 +\frac1{\cosh((t+K)/L)}\right\}.
\end{equation}
It is bounded below by a positive constant on the hard window.
It is nonnegative on the real axis and at exceptional spectral
parameters $t\in i[-1/2,1/2]$. The factor $t^2+1/4$ makes
$H_{K,L}(\pm i/2)=0$, as required in a usual version of Kuznetsov.
After $t=K+Lu$, this correction merely replaces $1/\cosh u$ by
a linear combination of $1/\cosh u$, $u/\cosh u$, and
$u^2/\cosh u$, with uniformly bounded coefficients. All kernel
estimates used below remain uniform. Use Fourier coefficients
on the trace formula side. Apply \eqref{eq:8} only to the primitive
hard-window contribution, where \(k_j\asymp K\); this costs only
an arbitrarily small power of \(K\). The full trace formula keeps
its harmonic normalization and is used by positivity.
The continuous contribution
is nonnegative. The original diagonal is
\(O(K^\varepsilon KLY)\), by the fixed-\(g\) Rankin--Selberg
bound \(\sum_{n\leq u}|\lambda_g(n)|^2\ll_g u\).

The off-diagonal kernel restricts the modulus to
\begin{equation}\label{eq:36}
N\mid c,\qquad c\leq K^a C,\qquad C=Y/(KL).
\end{equation}
Use the quadratic kernel expansion in
\cite[Sections 4.1--4.4]{LY}; the wider range \eqref{eq:33} is precisely
where that expansion is valid. The more flexible expansion of
\cite[Lemma 5.1]{LLY} is available for shorter windows, but the
indices in \eqref{eq:52} below refer to the quadratic version. In applying
the small-argument estimate one must also control the infinite
modulus tail. Shift the spectral contour in the Kuznetsov Bessel
integral to \(\operatorname{Im}t=-3/4\). This gives
\(V_{K,L}(x)\ll_A K^{-A}x\) for \(0<x\leq1\).
The possible pole at $t=-i/2$ is canceled by the zero of
$H_{K,L}$. On the shifted contour the real part of the Bessel
order is $3/2$. Its power series and repeated integration by parts
in the localized spectral variable give
$O_A(K^{-A}x^{3/2})$, which implies the stated bound. Together with the Weil bound,
this makes the infinite \(c\)-tail absolutely summable. For
\(x\geq1\) in the excluded range, the rapid-decay estimate for the
kernel is summed over only finitely many moduli.

Open the Kloosterman sum and apply Voronoi to the second
coefficient of \(g\), following \cite[Sections 4.5--4.8]{LY}.
There is no change of cusp here: since \(N\mid c\), the rational
point in question comes from a matrix in \(\Gamma_0(N)\).
The transformed form is therefore the same fixed form \(g\).
The Maass \(K\)-Bessel branch is negligible: its argument is at
least a constant times
\(\sqrt Y/c\geq KLK^{-a}/\sqrt Y\geq LK^{-a}\).

Writing the dual index as \(r=n+h\), a typical oscillatory phase,
in the variables \(n=Yz\), \(y=Yw^2/z\), is
\begin{equation}\label{eq:37}
\begin{split}
 \Phi(z,w)={}&\frac{2Yw}{c}
       \left(\eta_1+\eta_2\sqrt{1+\frac h{Yz}}\right)
       -\frac{\eta_3K^2c}{4\pi^2Yw}\\
       &+\alpha Y^\beta
             \left(z^\beta-w^{2\beta}z^{-\beta}\right),
       \qquad \eta_1,\eta_2,\eta_3\in\{\pm1\}.
 \end{split}
\end{equation}
It is enough to estimate all the displayed sign choices
uniformly, including any combinations absent from the expansion. The compact support bounds \(z,w\)
above and below by positive constants.
Because \(F\ll (Y/c)K^{a-b}\), the branch
\(\eta_1=\eta_2\) is nonstationary. The opposite-sign branch
first restricts \(r\asymp Y\), by the first derivative test;
the power separation in \eqref{eq:33} makes this uniform. It then
restricts
\begin{equation}\label{eq:38}
|h|\leq K^a cQ_c,\qquad
 Q_c=1+F+K^2c/Y.
\end{equation}
Indeed, the three variable phase scales in the \(w\)-derivative
are \(|h|/c\), \(F\), and \(K^2c/Y\). If \(|h|/c\) exceeds
the other two by a sufficiently large power of \(K\), repeated
integration by parts is arbitrarily saving. Moreover,
\begin{equation}\label{eq:39}
\frac{K^a cQ_c}{Y}
 \ll K^{2a-b}+K^{3a}/L^2+K^{2a}/(KL)=o(1).
\end{equation}
Thus all surviving dual indices satisfy \(r=n+h\asymp Y\),
and all normalized amplitudes below have uniform compact support.
The extra \(cF\) in \eqref{eq:38} is retained throughout.

Before taking absolute values, the leading transformed term has
exactly the arithmetic factor
\begin{equation}\label{eq:40}
L\sqrt Y\sum_{\substack{c\leq K^a C\\N\mid c}}
         \frac1c\sum_h R_c(h)P(c,h,Y),
 \qquad
 R_c(h)=\sum_{d\bmod c}^{*}\e(dh/c).
\end{equation}
Here \(P\) is the shifted sum in \cite[(4.11)]{LY}, with the
nonlinear phase inserted. This factor follows directly from
\cite[(4.10)]{LY}: its modulus power is \(c^{-1}\) for the
leading term. The extra power of \(c\) in that paper's (4.11)
comes only from the subsequent bound \(|R_c(h)|\leq c\).
We retain (40).

We specify the weights for the leading term, in order to keep track
of the scale in its Mellin transform. We use the Fourier convention
$\widehat v(\xi)=\int_{\mathbb R}v(u)e(u\xi)\,du$. Put
\[
 \mathcal H(\xi)=\widehat q(\xi)+(L/K)\widehat{uq}(\xi),
 \qquad
 q(u)=\frac{(K+Lu)^2+1/4}{K^2+1/4}\frac1{\cosh u}.
\]
The two packets in \eqref{eq:35} are
$q((t-K)/L)$ and $q(-(t+K)/L)$; thus the second packet is
the reflection of the first, even though $q$ need not be even.
The functions $q$ and $\mathcal H$ have uniformly bounded Schwartz
seminorms, because $L/K\le1$. Up to a fixed coefficient and a
choice of signs, the leading shifted sum is
\[
 \begin{split}
 P(c,h,Y)=\sum_{\substack{n\ge1\\n+h\ge1}}
 &\frac{\lambda_g(n)\lambda_g(n+h)}{(n(n+h))^{1/4}}
 A(n/Y)e(\alpha n^\beta)B(n,n+h),\\
 B(n,r)=K\int_0^\infty
 &\overline{A(v)}v^{-1/2}
 \mathcal H\left(\frac{\eta_3LKc}{2\pi^2\sqrt{vYn}}\right)\\
 &\times e\left(\frac{2\sqrt{vY}}c
              (\eta_1\sqrt n+\eta_2\sqrt r)
 -\frac{\eta_3K^2c}{4\pi^2\sqrt{vYn}}
 -\alpha Y^\beta v^\beta\right)\,dv.
 \end{split}
\]
These are the leading weights in \cite[(4.10)]{LY} after the
nonlinear twist has been inserted. The harmless changes of sign
in the Fourier convention can be included among the sign choices.

Write $d=h/Y$ and
\[
 \rho_g(z,d)=
 \left(\frac{\sqrt{z(z+d)}}{2z+d}\right)^{2ik_g}.
\]
This factor and its reciprocal have bounded derivatives on our
support, since $g$ is fixed and $|d|=o(1)$. For the leading term
define
\[
 \begin{split}
 G_0(z)={}&2K\rho_g(z,d)^{-1}z^{-3/4}(z+d)^{-1/4}A(z)\\
 &\times\int_0^\infty
 \overline{A(w^2/z)}
 \mathcal H\left(\frac{\eta_3LKc}{2\pi^2Yw}\right)
 e\bigl(\Phi(z,w)\bigr)\,dw.
 \end{split}
\]
The substitution $v=w^2/z$ gives precisely the phase in (37).
If $x=n+(n+h)=2Yz+h$, let $G(x)=Y^{-1/2}G_0(z)$.
Then
\[
 \widehat G(s)=\int_0^\infty G(x)x^{s-1}\,dx
 =(2Y)^sY^{-1/2}\int_0^\infty
 G_0(z)(z+d/2)^{s-1}\,dz.
\]
The inverse factor $\rho_g^{-1}$ is exactly what removes the ratio
factor in the shifted Dirichlet series. This identity also shows
the factor $Y^{\sigma-1/2}$ without suppressing a power of $Y$.

\subsection{The Mellin transform and a Plancherel estimate}

For $h\ne0$, initially in the half-plane of absolute convergence,
put
\[
 D_g^+(s,1,1,h)=\sum_{\substack{n,r\ge1\\r-n=h}}
 \lambda_g(n)\lambda_g(r)
 \left(\frac{\sqrt{nr}}{n+r}\right)^{2ik_g}(n+r)^{-s}.
\]
Mellin inversion, as in \cite[(4.13)--(4.15)]{LY},
expresses the leading shifted sum as
\begin{equation}\label{eq:41}
P(c,h,Y)=\frac1{2\pi i}\int_{(\sigma)}
                 D_g^+(s,1,1,h)\widehat G(s)\,ds,
 \qquad \sigma=\tfrac12+\theta+\varepsilon_1.
\end{equation}
The superscript \(+\) means positive indices only, with the
fixed ratio factor required for Maass \(g\). Its inverse is
incorporated into \(G\). That ratio and all its derivatives are
bounded on the support \(n,n+h\asymp Y\).

The published bound for the all-signed Maass series is sufficient.
Negative-negative pairs duplicate the positive-positive pairs
because the coefficients are real and have a fixed parity.
The remaining mixed-sign pairs have
\(|m|+|n|=|h|\). Each such term in the inverse Mellin integral
is multiplied by \(G(|h|)=0\), since \(|h|=o(Y)\) and
\(G\) is supported at scale \(Y\). Thus \eqref{eq:41} can be evaluated
with one half of the all-signed series, without a mixed-sign
error. This reasoning may first be made on an absolutely
convergent line and then continued; \(\sigma\) in \eqref{eq:41} lies
to the right of all exceptional poles. Negative \(h\) is handled
by exchanging the two indices.

From \cite[(1.7)--(1.10)]{LLYJNT}, choose an arbitrarily small
\(0<\varepsilon_1<1/4\). We may use the global estimate
\begin{equation}\label{eq:42}
|D_g(\sigma+it,1,1,h)|
 \ll |h|^{O(\varepsilon_1)}
       \left\{(1+|t|)^{1+\varepsilon_1}
                      +|h|^{1/2-\theta}\right\}.
\end{equation}
For \(|t|\geq1\) this is their improved bound; for
\(|t|\leq1\) their earlier cubic bound gives the same assertion.
The latter step is important, since the improved bound is stated
only for \(|t|\geq1\). For holomorphic \(g\), the second term
in braces can be omitted.

In the variable change leading to \cite[(4.17)]{LY}, both the
spectral correction \(K^2c/(4\pi^2Yw)\) and the Fourier-transform
amplitude become independent of \(z\). Consequently,
\begin{equation}\label{eq:43}
\norm{G_0^{(m)}}_\infty
 \ll_m K(1+F+|h|/c)^m.
\end{equation}
The \(K\) accounts for the leading spectral amplitude;
\(L\leq K\). The support is fixed. More explicitly, set $u=\log(z+d/2)$ and
\[
 V(u)=(2Y)^\sigma Y^{-1/2}e^{\sigma u}
                     G_0(e^u-d/2).
\]
Then $\widehat G(\sigma+it)=e^{it\log(2Y)}
\int V(u)e^{itu}\,du$. The support of $V$ stays in a fixed
compact interval, and
\begin{equation}\label{eq:44}
\norm V_2\ll K Y^{\theta+\varepsilon_1},\qquad
 \norm{V^{(m)}}_2
   \ll_m K Y^{\theta+\varepsilon_1}(K^aQ_c)^m.
\end{equation}
The factor \(Y^{\theta+\varepsilon_1}\) comes from
\((2Y)^sY^{-1/2}\), exactly as in (4.13) of \cite{LY}.

We spell out the transform estimate. If
\(\norm{V^{(m)}}_2\ll_m A Q^m\), \(Q\geq1\), then for fixed
\(p\geq0\) and any integer \(m>p+1/2\),
\begin{equation}\label{eq:45}
\int_{\mathbb R}(1+|t|)^p|\widehat V(t)|\,dt
 \ll_p A Q^{p+1/2}.
\end{equation}
For \(|t|\leq Q\), Cauchy--Schwarz and Plancherel give
\(\ll A Q^{p+1/2}\). For \(|t|>Q\), apply
Cauchy--Schwarz with weights \(|t|^{p-m}\) and \(|t|^m\).
The first squared integral is \(O(Q^{2p-2m+1})\), and the
second square root is \(O(AQ^m)\), giving the same bound.
This proves (45). Apply it with \(p=1+\varepsilon_1\), \(m=2\),
and also with \(p=0\), \(m=1\).
Equations \eqref{eq:41}--\eqref{eq:45} give
\begin{equation}\label{eq:46}
|P(c,h,Y)|
 \ll K^{1+O(a+\varepsilon_1)}Y^{\theta+\varepsilon_1}
    \left\{Q_c^{3/2+\varepsilon_1}
                  +|h|^{1/2-\theta}Q_c^{1/2}\right\}.
\end{equation}
Since \(Y\leq K^2\), we have \(c\leq Q_c\). From \eqref{eq:38},
\begin{equation}\label{eq:47}
|h|^{1/2-\theta}Q_c^{1/2}
 \ll K^{O(a)}Q_c^{3/2-2\theta}\leq K^{O(a)}Q_c^{3/2}.
\end{equation}
The additional Maass term is therefore absorbed.

\subsection{Ramanujan averaging, the zero shift, and remainders}

For every real \(H\geq0\),
\begin{equation}\label{eq:48}
\sum_{0<|h|\leq H}|R_c(h)|
 \leq\sum_{0<|h|\leq H}(c,h)
 =2\sum_{d\mid c}\varphi(d)\lfloor H/d\rfloor
 \leq2H\tau(c).
\end{equation}
This bound has no \(+c\) error; the zero shift is excluded.
Use \eqref{eq:46}--\eqref{eq:48} in \eqref{eq:40}, with \(H=K^acQ_c\). Since
\(Q_c\ll K^a Z\) and the number of moduli is at most \(K^aC\),
\begin{equation}\label{eq:49}
\begin{split}
 \M_{\ne0}
 &\ll K^{1+O(a+\varepsilon_1)}L Y^{1/2+\theta+\varepsilon_1}
             \sum_{c\leq K^aC}Q_c^{5/2+\varepsilon_1}\tau(c)\\
 &\ll K^\varepsilon Y^{3/2+\theta}Z^{5/2}.
 \end{split}
\end{equation}
All powers absorbed in \(K^\varepsilon\) are legitimate because
\(Y\leq K^2\), \(F\ll KL\), and \(c,Q_c,Z\) are bounded by
fixed powers of \(K\). If \(K^a C<1\), the modulus sum is empty.

For \(h=0\), the two square-root phases cancel in the opposite-sign
branch. Its \(w\)-derivative has the two scales
\(K^2c/Y\) and \(F\). It is nonstationary unless
\begin{equation}\label{eq:50}
c\ll K^a(1+F)Y/K^2.
\end{equation}
Derivatives of the Fourier amplitude cost
\(1+LKc/Y=1+(L/K)(K^2c/Y)\). Integration by parts outside \eqref{eq:50}
therefore gains both a power from \(K^2c/Y\) and the small ratio
\(L/K\). It can be repeated sufficiently many times to make
that contribution negligible. On each remaining modulus,
\(|B(n,n)|\ll K\), and Rankin--Selberg gives
\(|P(c,0,Y)|\ll K\sqrt Y\). Since
\(|R_c(0)|/c=\varphi(c)/c\leq1\), its contribution in \eqref{eq:40} is
\(O(KLY)\). There are at most \(O(K^a(1+F))\) such moduli,
so
\begin{equation}\label{eq:51}
\M_0\ll K^\varepsilon KLY(1+F).
\end{equation}
This also absorbs the original diagonal.

We record the control of expansion terms, rather than restricting
the proof to the leading one. In the expansion of \cite{LY}, a
term with indices $(\mu,\nu,j,k)$, $2\mu\le\nu$, put
$\lambda=k-\mu$. Before estimating the shifted sum, its front
factor and arithmetic sum are
\[
 L^{1-2\nu+2k}Y^{(1-\lambda-j)/2}
 \sum_c c^{\lambda+j-1}\sum_h R_c(h)P_{\mu,\nu,j,k}(c,h,Y).
\]
The shifted weight contains
$n^{-\lambda/2-1/4}(n+h)^{-j/2-1/4}$ in place of
$(n(n+h))^{-1/4}$. Its Mellin transform has the scale
$K Y^{\theta+\varepsilon_1-(\lambda+j)/2}$.
Thus the ratio of its bound to the leading bound, before summing
over $c$, is
\[
 L^{-2\nu}(Y/c)^\mu(c/Y)^j(L^2c/Y)^k.
\]
Since $\nu\ge2\mu$, this is at most
\begin{equation}\label{eq:52}
\left(\frac{Y}{cL^4}\right)^\mu
 \left(\frac cY\right)^j
 \left(\frac{L^2c}{Y}\right)^k.
\end{equation}
The first factor is at most \((K^2/L^4)^\mu\), and the last
is at most \((K^aL/K)^k\). They and the middle factor are
bounded in \eqref{eq:33}, for sufficiently small \(a\). Their normalized
weights satisfy the same derivative bounds (43). The Ramanujan
factor and the saving in \eqref{eq:48} remain present in each term.
For the zero shift the corresponding bound is
$P_{\mu,\nu,j,k}(c,0,Y)\ll K Y^{(1-\lambda-j)/2}$,
so the same multiplier also controls \eqref{eq:51}.

The small parameters in the kernel expansion are
\(\sqrt Y/L^2\leq K^{-2\delta}\) and
\(L^2/x\ll K^aL/K\leq K^{a-\delta}\), where
\(x\asymp Y/c\). Taking sufficiently many terms controls the
kernel remainder. The fixed-\(g\) oscillatory Voronoi expansion
also has a uniformly large Bessel argument, at least \(LK^{-a}\);
its remainder is controlled by taking sufficiently many terms.
The absolute remainder estimates before transformation are
unaffected by the unit-modulus twist. In the integration-by-parts
truncations after transformation, derivatives of the twist have
scale \(F\), and the power separation in \eqref{eq:33} is the required
margin. These are the valid reductions of
\cite[Sections 4.1--4.13]{LY}; we do not use Sections 4.14--4.15
or the shorter claimed range \(|h|\ll cK^a\).
Combining \eqref{eq:49} and \eqref{eq:51}, including all expansion terms, proves
Proposition \ref{prop:numericmoment}.

\subsection{The individual exponent and the original cutoff}

Apply the partition \eqref{eq:17}--(22). Its normalized dyadic amplitudes
are \(A_Y(u)=\psi(u)\phi(Yu/X)\), which have uniformly bounded
smooth seminorms; the nonlinear phase is already explicit in
Proposition \ref{prop:numericmoment}. Minkowski and geometric
summation give
\begin{equation}\label{eq:53}
\M_{K,L}(X;\alpha,\beta)
 \ll K^\varepsilon
  \left\{KLX(1+|\alpha|X^\beta)
     +X^{3/2+\theta}(1+|\alpha|X^\beta+K/L)^{5/2}\right\}
\end{equation}
for \(1\leq X\leq K^2\) satisfying the corresponding condition
in (33). The separate \(n=1\) term is absorbed as before.

For comparison, retaining only the cubic shifted-series bound
in \eqref{eq:42}, and using \(p=3,m=4\) in \eqref{eq:45}, gives \(Z^{9/2}\).
At \(X=K^2\), balancing the two terms then yields
\begin{equation}\label{eq:54}
L=K^{151/176-(4/11)\beta},\qquad
 |S_{K^2}|\ll K^{679/352+(9/11)\beta+\varepsilon},
 \qquad 0<\beta<25/288.
\end{equation}
Thus the more elementary cubic bound already gives a numerical
range. To prove Theorem \ref{thm:numeric}, use \eqref{eq:53} and set
\begin{equation}\label{eq:55}
\eta=\frac{5+4\theta-4\beta}{7}
       =\frac{87}{112}-\frac{4\beta}{7},\qquad L=K^\eta.
\end{equation}
For \(0<\beta<(1-2\theta)/5=5/32\),
\(2\beta<1-\eta\). Fixed \(\alpha\) therefore gives
\(|\alpha|K^{2\beta}=o(K/L)\), with a power separation.
The exponent \(\eta\) lies between \(11/16\) and \(87/112\),
so \eqref{eq:33} holds for some fixed \(\delta,b>0\).
At \(X=K^2\), positivity and \eqref{eq:53} give the two individual
bounds with exponents
\begin{equation}\label{eq:56}
\frac{3+\eta}{2}+\beta,
 \qquad \frac{11}{4}+\theta-\frac{5\eta}{4}.
\end{equation}
They agree for \eqref{eq:55}, and their common value is
\begin{equation}\label{eq:57}
\frac{13+2\theta}{7}+\frac{5\beta}{7}
       =\frac{423}{224}+\frac{5\beta}{7}.
\end{equation}
It is below \(2\) exactly for \(\beta<5/32\).
This proves Theorem \ref{thm:numeric}.

\begin{remark}
The two theorems have different strengths. Theorem~\ref{thm:main}
approaches $K^{5/3}$ at $X=K^2$, but its range of $\beta$ depends
on the requested loss and is not numerical. Theorem~\ref{thm:numeric}
gives an explicit range, with a weaker exponent. If $F$ exceeds
$K/L$, the nonlinear phase can enlarge both the shift range and
the Mellin-frequency range. The moment in \eqref{eq:53} records both
effects; improving it in that range is a separate problem.
\end{remark}

\section*{Declaration of AI Use}
During the preparation of this work, the 
author used OpenAI's GPT-6 Astra to 
implement proof strategy, assist 
with computations, support literature 
searches, and typeset the manuscript. 
The author carefully reviewed, 
verified, and edited all AI-generated 
content as necessary and takes full 
responsibility for the accuracy, 
originality, and integrity of the 
published article.

\end{document}